\newtheorem{thm}{Theorem}[section]
\theoremstyle{definition}
\newtheorem{cor}[thm]{Corollary}
\newtheorem{prop}[thm]{Proposition}
\newtheorem{defn}[thm]{Definition}
\newtheorem{lem}[thm]{Lemma}
\newtheorem{rem}[thm]{Remark}
\newtheorem{ex}[thm]{Example}
\numberwithin{equation}{section}
\begin{document}

\title[The dual notion of morphic modules over commutative rings]{The dual notion of morphic modules over commutative rings \footnote{This article was accepted for publication in Journal of Mathematical Sciences.}}

\author[Faranak Farshadifar]%
{Faranak Farshadifar*}

\newcommand{\acr}{\newline\indent}
\address{\llap{*\,} Department of Mathematics Education, Farhangian University, P.O. Box 14665-889, Tehran, Iran.}
\email{f.farshadifar@cfu.ac.ir}

\subjclass[2010]{13C13, 13C99}
\keywords {morphic module, completely irreducible submodule, finitely cogenerated submodule, comorphic module.}

\begin{abstract}
Let $R$ be a commutative ring with identity and $M$ be an $R$-module.
The purpose of this paper is to introduce and investigate the dual notion of morphic modules over a commutative ring.
\end{abstract}
\maketitle
\section{Introduction}
\noindent
Throughout this paper, $R$ will denote a commutative ring with
identity and $\Bbb Z$ will denote the ring of integers. Let $M$ be an $R$-module, $L$ be a submodule of $M$, and $a \in R$. Then the notation $(L:_Ma)$  is defined by $\{m \in M \, |\, ma \in L\}$.

Let $M$ be an $R$-module.
A proper submodule $N$ of $M$ is said to be \emph{completely irreducible} if $N=\bigcap _
{i \in I}N_i$, where $ \{ N_i \}_{i \in I}$ is a family of
submodules of $M$, implies that $N=N_i$ for some $i \in I$. Every submodule of $M$ is an intersection of completely irreducible submodules of $M$ \cite{FHo06}. $M$ satisfies the \emph{double annihilator
conditions} ($DAC$ for short) if, for each ideal $I$ of $R$,
we have $I=Ann_R(0:_MI)$ \cite{Fa95}. Following \cite[Definition 2.7]{AF11}, let
$$
I^M_0(M)= \cap \{L \mid  L \\\ is \\\ a \\\ completely \\\
irreducible
\\\ submodule \\\ of \\\ M\\\ and
$$
$$
 rM\subseteq L \\\ for \\\ some \\\ 0\not=r \in R \}.
$$

A commutative ring $R$ is  said to be a \textit{morphic ring} if, for
every element $a \in R$, $R/Ra \cong Ann_R(Ra)$ (isomorphism of $R$-modules). $R$ is morphic if and only if, for every $a \in R$, there exists $b \in R$ such that
$Ra = Ann_R(Rb)$ and $Ann_R(Ra) = Rb$ \cite[Lemma1]{MR2022487}.
Nicholson and S´anchez Campos extended the notion of morphic rings to
modules (\cite{NS05}). An endomorphism $a$ of a module $M$ is called \textit{morphic} if $M/a(M)\cong Ker(a)$. The module $M$ is called a \textit{morphic module} if every endomorphism of $M$ is morphic. Note that the endomorphisms of the $R$-module
$R$ are only the multiplication morphisms. Thus a ring $R$ is morphic if and only
if $R$ is a morphic $R$-module.
Recently in \cite{BTTK2022}, the authors introduced and investigated a new definition of morphic $R$-module which is also an extension of the notion of morphic rings.
An $R$-module $M$ is said to be a \textit{morphic $R$-module} if, for each $m \in M$, there exists
$a \in R$ such that $Rm = (0:_Ma)$ and $Ann_R(Rm) = Ra + Ann_R(M)$ \cite{BTTK2022}.

In this paper, we introduce and study the dual notion of morphic modules over a commutative ring in the sense of \cite{BTTK2022}.
We say that an $R$-module $M$ is a \textit{comorphic module} if for each completely irreducible submodule $L$ of $M$,
there exists $a \in R$ such that $L = aM$ and $(L:_RM) = Ra + Ann_R(M)$ (Definition \ref{1}).
 Example \ref{9}, shows that the concepts of morphic and comorphic $R$-modules are different in general. Among other results, it is shown that $M$ is a comorphic $R$-module if and only if for every submodule $N$ of $M$ with $M/N$ is a finitely cogenerated $R$-module, there exists $a \in R$ such that
$N = aM$ and $(N:_RM)= (a) + Ann_R(M)$ (Theorem \ref{5}). Also, it is proved that if $M$ is a comorphic $R$-module,  then the finite sum of  completely irreducible submodules of $M$ is a completely irreducible submodule of $M$ (Corollary \ref{4}). Proposition \ref{19}, shows that  every comorphic $R$-module is co-Hopfian. We say that an $R$-module $M$ is a \textit{co-Bézout module} if evey submodule $N$ of $M$ with $M/N$ is finitely cogenerated is a completely irreducible submodule of $M$ (Definition \ref{11}). It is shown that
if $M$ is a comorphic $R$-module which satisfies the $DAC$ and $I^M_0(M)\not=0$, then $R$ is a morphic ring and $M$ is a co-Bézout module (Theorem \ref{10}). Finally, the notion of quasi morphic (resp. quasi comorphic) $R$-module which is a generalization of morphic ring is difined (Definition \ref{20}). Morover, we show that if $M$ is cyclic (resp. a cocyclic $R$-module which satisfies the $DAC$) and morphic $R$-module in the sense of Nicholson and Sánchez Campos, then $M$ is a quasi morphic (resp. quasi comorphic) $R$-module (Proposition \ref{21}).
 It should be note that most of the results in this paper are the dual of the results in the paper \cite{BTTK2022}.
\section{Main results}
We frequently use the following basic facts.
\begin{rem}\label{r2.1}
Let $M$ be an $R$-module.
\begin{itemize}
\item [(a)] If $L$ is a completely irreducible submodule of $M$, then $(L:_Ma)$ is a completely irreducible submodule of $M$ \cite[Lemma 2.1]{MR3588217}.
\item [(b)] If $N$ and $K$ are two submodules of $M$, to prove $N\subseteq K$, it is enough to show that if $L$ is a completely irreducible submodule of $M$ such that $K\subseteq L$, then $N\subseteq L$ \cite[Remark 2.1]{AF1001}.
\end{itemize}
 \end{rem}

\begin{defn}\label{1}
We say that an $R$-module $M$ is a \textit{comorphic module} if for each completely irreducible submodule $L$ of $M$,
there exists $a \in R$ such that $L = aM$ and $(L:_RM) = Ra + Ann_R(M)$. This can be regarded as a dual notion of morphic module in the sense of \cite{BTTK2022}.
\end{defn}

A submodule $N$ of an $R$-module $M$ is said to be a \textit{multiple} of $M$, provided that $N = rM$ for some $r \in R$. If every submodule of $M$ is a multiple of $M$, then $M$ is said to be a \textit{principal ideal multiplication module} \cite{MR2451996}.

\begin{ex}\label{2}
By using \cite[Corollary of Theorem 9]{MR933916}, every finitely generated principal ideal multiplication module over a principal ideal ring is a comorphic module. In particular, the
$\Bbb Z$-module $\Bbb Z_n$ for each positive integer $n\geq 2$ is a comorphic  $\Bbb Z$-module. Also,  every principal ideal ring $R$ is a comorphic $R$-module.
\end{ex}

\begin{prop}\label{3}
Let $M$ be a comorphic $R$-module. Then the sum of
two completely irreducible submodules of $M$ is a completely irreducible submodule of $M$.
\end{prop}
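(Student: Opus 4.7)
The plan is to use the comorphic hypothesis to obtain principal representations $L_i = a_i M$ for each completely irreducible $L_i$, and then to transfer the cocyclic structure of $M/L_i$ to the further quotient $M/(L_1+L_2)$ in order to conclude that $L_1+L_2$ is itself completely irreducible.

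By Definition \ref{1}, one picks $a_1, a_2 \in R$ with $L_i = a_i M$ and $(L_i :_R M) = Ra_i + \mathrm{Ann}_R(M)$ for $i = 1, 2$, so that $L_1 + L_2 = a_1 M + a_2 M$. Recalling the standard reformulation -- that a proper submodule $N$ of $M$ is completely irreducible precisely when $M/N$ possesses a unique smallest nonzero submodule -- the hypothesis gives that $M/L_1$ and $M/L_2$ are cocyclic, with respective socles $L_1^{*}/L_1$ and $L_2^{*}/L_2$, where $L_i^{*}$ denotes the unique smallest submodule of $M$ strictly containing $L_i$.

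Next I would examine the canonical surjection $\pi \colon M/L_1 \twoheadrightarrow M/(L_1+L_2)$. The image $\pi(L_1^{*}/L_1) = (L_1^{*} + L_2)/(L_1 + L_2)$ is the natural candidate for a smallest nonzero submodule of $M/(L_1+L_2)$; when it is nonzero, the simplicity and essentiality of the socle in the cocyclic source $M/L_1$ descend to give a cocyclic structure on $M/(L_1+L_2)$, whence $L_1+L_2$ is completely irreducible. By symmetry, the analogous argument applies with $L_2$ in place of $L_1$.

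The main obstacle is the degenerate case in which both socle images vanish, i.e.\ $L_1^{*} \subseteq L_1 + L_2$ and $L_2^{*} \subseteq L_1 + L_2$, so that neither cocyclic quotient directly supplies a socle for $M/(L_1+L_2)$. Here one must appeal to the principal representations $L_i = a_i M$ together with the full strength of the comorphic hypothesis: the plan is to produce $b \in R$ such that $L_1 + L_2 = bM$ and $(L_1 + L_2 :_R M) = Rb + \mathrm{Ann}_R(M)$, then verify via Remark \ref{r2.1}(b) that $bM$ admits a unique smallest superset in $M$. This case analysis, together with the tacit assumption $L_1 + L_2 \neq M$ needed for complete irreducibility to apply at all, is the crux of the argument.
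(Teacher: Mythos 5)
Your proposal has a genuine gap: the socle-descent step does essentially no work, and the case that carries all the content is left as an unexecuted plan. In the surjection $\pi\colon M/L_1\twoheadrightarrow M/(L_1+L_2)$ the kernel is $(L_1+L_2)/L_1$. If this kernel is nonzero then, since the socle $L_1^{*}/L_1$ is simple and essential in the cocyclic module $M/L_1$, the kernel must contain it, so $\pi(L_1^{*}/L_1)=0$. Hence your ``non-degenerate'' case $\pi(L_1^{*}/L_1)\neq 0$ occurs only when $L_1+L_2=L_1$, i.e.\ $L_2\subseteq L_1$ (and symmetrically for the map out of $M/L_2$), where the statement is trivial. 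In every nontrivial situation you land in your ``degenerate'' case, for which you offer only the intention to ``produce $b\in R$ such that $L_1+L_2=bM$'' --- but that is essentially the assertion to be proved, and no mechanism for producing $b$ is given. Note also that the comorphic hypothesis, as stated in Definition \ref{1}, applies only to submodules already known to be completely irreducible, so it cannot be invoked directly on $L_1+L_2$ without circularity.

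The paper supplies exactly the missing mechanism, and it runs through Remark \ref{r2.1}(a) rather than through cocyclic quotients. Writing $L_1=aM$ and $L_2=bM$ via the comorphic hypothesis, one observes that $(L_2:_Ma)$ is again completely irreducible by Remark \ref{r2.1}(a), so the hypothesis applies to it and yields $c\in R$ with $(L_2:_Ma)=cM$; one then identifies $L_1+L_2$ with the colon submodule $(L_2:_Mc)$, which is completely irreducible by another application of Remark \ref{r2.1}(a). If you want to salvage your approach, this is the step you need: realize $L_1+L_2$ as $(L:_Mr)$ for a completely irreducible $L$ and some $r\in R$, rather than trying to transfer a socle along a quotient map where essentiality forces the socle to die.
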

\begin{proof}
Let $L_1$ and $L_2$ be two completely irreducible submodules of $M$. As $M$ is a comorphic $R$-module, there exist $a , b \in R$ such that $L_1=aM$, $(L_1:_RM) = Ra + Ann_R(M)$,  $L_2=bM$ , and $(L_2:_RM) = Rb + Ann_R(M)$. Thus by assumption,
 $(L_2:_Ma)=cM$, $((L_2:_Ma):_RM) = Rc + Ann_R(M)$ for some $c \in R$. We will show that $L_1+L_2=(L_2:_Mc)$. Let $x \in L_1+L_2=aM+bM$. Then $cx \in caM+cbM\subseteq L_2$. This implies that $L_1+L_2\subseteq (L_2:_Mc)$.
On the other hand,
$(L_2:_Ma)=cM$ implies that $caM \subseteq L_2$. Thus $L_1=aM \subseteq (L_2:_Mc)$. Therefore, $L_1+L_2 \subseteq  (L_2:_Mc)$ and by Remark \ref{r2.1} (a), we are done.
\end{proof}

\begin{cor}\label{4}
If $M$ is a comorphic $R$-module,  then the finite sum of
completely irreducible submodules of $M$ is a completely irreducible submodule of $M$.
\end{cor}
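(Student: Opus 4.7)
The plan is to proceed by induction on $n$, the number of completely irreducible submodules in the sum. The base case $n=1$ is trivial, and the case $n=2$ is exactly the content of Proposition \ref{3}, which has already been established.

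For the inductive step, I would assume that for some $n \geq 2$ the sum $L_1 + L_2 + \cdots + L_n$ of any $n$ completely irreducible submodules of $M$ is again completely irreducible. Given $n+1$ completely irreducible submodules $L_1, \ldots, L_{n+1}$ of $M$, I would rewrite the sum as
\[
L_1 + L_2 + \cdots + L_{n+1} = (L_1 + L_2 + \cdots + L_n) + L_{n+1}.
\]
By the inductive hypothesis, the submodule $N := L_1 + \cdots + L_n$ is completely irreducible, and $L_{n+1}$ is completely irreducible by assumption. Applying Proposition \ref{3} to the pair $N$ and $L_{n+1}$ in the comorphic $R$-module $M$ shows that $N + L_{n+1}$ is completely irreducible, which finishes the induction.

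There is no real obstacle here, since Proposition \ref{3} was stated for arbitrary pairs of completely irreducible submodules, and nothing in its hypotheses requires the two submodules to be distinct, independent, or of any particular form. The only point that deserves a brief check is that the inductive hypothesis gives back a completely irreducible submodule (not merely an irreducible one), so that Proposition \ref{3} may indeed be reapplied at the next step; this is precisely what the inductive hypothesis asserts, so the argument closes cleanly.
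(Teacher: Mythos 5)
Your proof is correct and takes essentially the same approach as the paper, which simply cites Proposition \ref{3}; you have merely written out explicitly the routine induction on the number of summands that the paper leaves implicit. No issues.
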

\begin{proof}
This follows from Proposition \ref{3}.
\end{proof}

The following theorem, is a characterization for the comorphic $R$-modules.
\begin{thm}\label{5}
For an $R$-module $M$ the following are equivalent:
\begin{itemize}
\item [(a)] $M$ is a comorphic $R$-module;
\item [(b)] For every submodule $N$ of $M$ with $M/N$ is a finitely cogenerated $R$-module, there exists $a \in R$ such that
$N = aM$ and $(N:_RM)= Ra + Ann_R(M)$.
\end{itemize}
\end{thm}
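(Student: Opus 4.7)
The plan is to pass back and forth between finitely cogenerated quotients and finite intersections of completely irreducible submodules. Concretely, $M/N$ is finitely cogenerated if and only if $N = L_1 \cap \cdots \cap L_n$ for some completely irreducible $L_1,\dots,L_n$ of $M$: the forward direction follows by writing $N$ as the intersection of all completely irreducibles containing it (possible by \cite{FHo06}) and extracting a finite subfamily via the finitely cogenerated hypothesis applied in $M/N$; the reverse direction uses that $M/N$ embeds into the finite product $\prod M/L_i$ of cocyclic modules. Granted this, (b)$\Rightarrow$(a) is immediate, since every completely irreducible $L$ gives a cocyclic, hence finitely cogenerated, quotient $M/L$.

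For (a)$\Rightarrow$(b) I would proceed by induction on $n$. The base case $n = 1$ is exactly the definition of comorphic. For the inductive step, set $N' = L_1 \cap \cdots \cap L_{n-1}$, so the inductive hypothesis supplies $b \in R$ with $N' = bM$ and $(N' :_R M) = Rb + Ann_R(M)$, while (a) supplies $a_n$ with $L_n = a_n M$ and $(L_n :_R M) = Ra_n + Ann_R(M)$. Imitating the device of Proposition \ref{3}, I would form $K := (L_n :_M b)$, which is completely irreducible by Remark \ref{r2.1}(a), and apply (a) once more to produce $c \in R$ with $K = cM$ and $(K :_R M) = Rc + Ann_R(M)$. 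The candidate witness for $N$ is then $a = bc$.

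The equality $N = bcM$ is a short element chase. If $m \in N \subseteq N'$, write $m = bm'$; since $m \in L_n$, we have $bm' \in L_n$, forcing $m' \in K = cM$, so $m \in bcM$. Conversely, any $bcm'' \in bcM$ lies in $bM = N'$ trivially and, because $cm'' \in K = (L_n :_M b)$, satisfies $b(cm'') \in L_n$. For the annihilator identity $(N :_R M) = R(bc) + Ann_R(M)$, the containment $\supseteq$ is clear; for $\subseteq$, take $r \in (N :_R M)$, use $rM \subseteq N'$ together with the inductive annihilator formula to write $r = sb + t$ with $t \in Ann_R(M)$, then deduce $sbM \subseteq L_n$ (since $tM = 0$), equivalently $sM \subseteq (L_n :_M b) = K = cM$. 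Hence $s = uc + v$ with $v \in Ann_R(M)$, so $r = u(bc) + (vb + t) \in R(bc) + Ann_R(M)$, as required.

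The main obstacle I foresee is not the set-theoretic equality $N = bcM$, which is routine, but the annihilator bookkeeping in the induction: keeping track of the $Ann_R(M)$ summands and ensuring they collect cleanly into the final $R(bc) + Ann_R(M)$ is the step where both clauses of the comorphic definition are genuinely used.
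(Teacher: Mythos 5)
Your proof is correct and follows essentially the same route as the paper's: the same induction on the number of completely irreducible components, the same witness $bc$ obtained by applying the comorphic hypothesis to $(L_n:_M b)$ (the paper's $(L_n:_M a)=cM$ with witness $ac$), and the same annihilator bookkeeping. The only addition is that you spell out the standard equivalence between ``$M/N$ finitely cogenerated'' and ``$N$ is a finite intersection of completely irreducible submodules,'' which the paper uses without comment.
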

\begin{proof}
$(a) \Rightarrow (b)$
Let $N$ be a submodule of $M$ with $M/N$ be a finitely cogenerated $R$-module. Then  there exist completely irreducible submodules $L_1, L_2,\ldots ,L_n$ of $M$ such that $N=\cap_{i=1}^nL_i$. We proceed by induction on $n$. The induction start is just the definition
of comorphic modules. If $n\geq  1$, let $K =\cap^{n-1}_{i=1}L_i$. The induction hypothesis implies that
$K = aM$ and $(K:_RM)= Ra + Ann_R(M)$ for some $a \in R$. By part (a), $L_n = bM$ and $(L_n:_RM)= Rb + Ann_R(M)$  for some $b \in R$.  Also, by part (a), $(L_n :_Ma)= cM$ and $((L_n :_Ma):_RM)= Rc + Ann_R(M)$  for some $c \in R$. We claim that $N = acM$. Let $x \in  acM$. Then $x \in  acM\subseteq aM=K$ and   $x \in  acM\subseteq L_n$. Thus $x \in K \cap L_n=N$ and so $acM \subseteq N$. Now let $x \in N=K \cap L_n=aM \cap bM$. Then $x=am_1=bm_2$ for some $m_1,m_2 \in M$. Thus $x=am_1 \in L_n$ implies that $m_1 \in (L_n:_Ma)=cM$. Hence $m_1=cm_3$ for some $m_3 \in M$.  It follows that $x=acm_3\in acM$. Therefore, $N \subseteq acM$. Now we claim that $(N :_RM)= Rac + Ann_R(M)$.
Let $t \in (N:_RM)=(K\cap L_n:_RM)$. Then $t \in (K:_RM)$ and so $t=sa+u$ for some $s \in R$ and $u \in Ann_R(M)$ and $tM \subseteq L_n$. Thus $saM=tM\subseteq L_n$ and so $s \in ((L_n:_Ma):_RM)= Rc + Ann_R(M)$. Hence $s=hc+w$ for some $h \in R$ and $w \in Ann_R(M)$. Hence,
$t=cha+aw+u \in  Rac + Ann_R(M)$.  Consequently, $(N:_RM) \subseteq  Rac + Ann_R(M)$. Now let $t \in Rac + Ann_R(M)$. Then $t=acr+u_1$ for some $r \in R$ and  $u_1 \in Ann_R(M)$. It follows that  $tM=acrM=c(arM)\subseteq cM=(L_n:_Ra)$ and so $ta \in (L_n:_RM)$. Also, $tM=acrM \subseteq aM=K$. Therefore, $t \in (N:_RM)$. Hence, $Rac + Ann_R(M)\subseteq (N:_RM)$.
Thus we are done.

$(b) \Rightarrow (a)$
This is clear.
\end{proof}

\begin{cor}\label{6}
Every Artinian comorphic  $R$-module $M$ is a principal ideal multiplication $R$-module.
\end{cor}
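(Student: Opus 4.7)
The plan is to derive the corollary directly from Theorem \ref{5} by exploiting the fact that Artinian modules have especially nice finitely cogenerated quotients. Recall that an $R$-module is finitely cogenerated exactly when its socle is finitely generated and essential, and a standard characterization of Artinian modules states that an $R$-module $M$ is Artinian if and only if every factor module $M/N$ is finitely cogenerated. I would invoke this characterization as the single external input.

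Given this, the argument is short. Let $N$ be an arbitrary submodule of $M$. Since $M$ is Artinian, so is $M/N$, hence $M/N$ is finitely cogenerated. Because $M$ is comorphic, Theorem \ref{5} (the implication $(a)\Rightarrow(b)$) applies and yields an element $a\in R$ with $N=aM$ (the extra information $(N:_RM)=Ra+Ann_R(M)$ is not needed here). Thus every submodule of $M$ is a multiple of $M$, which is precisely the definition of a principal ideal multiplication module.

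There is essentially no obstacle; the whole content of the corollary is the passage from \emph{completely irreducible} submodules (in the definition of comorphic) to \emph{arbitrary} submodules, and that passage has already been done inside Theorem \ref{5} by reducing to finite intersections of completely irreducible submodules. The role of the Artinian hypothesis is merely to guarantee that every quotient falls into the class of modules covered by Theorem \ref{5}(b). So the write-up should be one short paragraph: cite the Artinian $\Leftrightarrow$ finitely cogenerated quotients characterization, then quote Theorem \ref{5}.
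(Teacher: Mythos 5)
Your proof is correct and is exactly the argument the paper intends: the paper's own proof is just the one-line citation of Theorem \ref{5}, and your write-up supplies the standard fact (Artinian $\Leftrightarrow$ every quotient is finitely cogenerated) needed to apply Theorem \ref{5}(b) to an arbitrary submodule.
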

\begin{proof}
This follows from Theorem \ref{5}.
\end{proof}

\begin{lem}\label{7}
Let $f : M\rightarrow \acute{M}$ be an  $R$-homomorphism. Then we have the following.
\begin{itemize}
\item [(a)] If $\{N_i\}_{i \in I}$ is a family of submodules of $M$, then
$$
f(\cap_{i\in I}(N_i+Ker f))=\cap_{i\in I}f(N_i+Ker f)=\cap_{i\in I}f(N_i).
$$
\item [(b)] If $f$ is an epimorphism and $\acute{L}$ is a completely irreducible submodule of $\acute{M}$, then $f^{-1}(\acute{L})$ is a  completely irreducible submodule of $M$.
\end{itemize}
\end{lem}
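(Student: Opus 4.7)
\noindent\textbf{Proof plan for Lemma \ref{7}.}

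For part (a) the middle equality is essentially cosmetic: since $\ker f$ maps to $0$, we have $f(N_i+\ker f)=f(N_i)$ for every $i$, and intersecting over $i$ gives $\cap_{i} f(N_i+\ker f)=\cap_{i} f(N_i)$. The first equality is where the real content sits. The inclusion $f(\cap_i(N_i+\ker f))\subseteq \cap_i f(N_i+\ker f)$ holds for any map. For the reverse inclusion I would argue as follows: fix $y$ in $\cap_i f(N_i+\ker f)$ and pick an index $i_0\in I$ together with a preimage $x\in N_{i_0}+\ker f$ with $f(x)=y$. For each remaining $i\in I$ there is $x_i\in N_i+\ker f$ with $f(x_i)=y$, hence $x-x_i\in\ker f$, so
\[
x=x_i+(x-x_i)\in N_i+\ker f+\ker f=N_i+\ker f.
\]
Thus the single element $x$ lies in every $N_i+\ker f$ and maps to $y$, proving $y\in f(\cap_i(N_i+\ker f))$. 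This ``aligning of preimages via the kernel'' is the only step that is not purely formal, and I expect it to be the main (mild) obstacle.

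For part (b), I would first check properness: since $f$ is surjective, $f^{-1}(L')=M$ would force $L'=f(M)=M'$, contradicting that $L'$ is a proper submodule; hence $f^{-1}(L')$ is proper. Next, assume $f^{-1}(L')=\cap_{i\in I}N_i$ for some family $\{N_i\}$ of submodules of $M$. Since $\ker f\subseteq f^{-1}(L')\subseteq N_i$ for every $i$, we have $N_i+\ker f=N_i$. Applying $f$ and invoking part (a) gives
\[
L'=f(f^{-1}(L'))=f\bigl(\cap_i N_i\bigr)=f\bigl(\cap_i(N_i+\ker f)\bigr)=\cap_i f(N_i),
\]
where the first equality uses surjectivity of $f$. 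Since each $f(N_i)$ is a submodule of $M'$ and $L'$ is completely irreducible in $M'$, there exists $i_0\in I$ with $f(N_{i_0})=L'$. Taking preimages and using $\ker f\subseteq N_{i_0}$ yields $f^{-1}(L')=f^{-1}(f(N_{i_0}))=N_{i_0}+\ker f=N_{i_0}$, which is exactly what is required. Thus the proof of (b) is short once part (a) is in hand, and the whole lemma reduces to the preimage-alignment trick in part (a).
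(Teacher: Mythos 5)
Your proposal is correct and follows essentially the same route as the paper: the same "align the preimages using the kernel" argument for the reverse inclusion in (a), and the same application of (a) together with $\ker f\subseteq f^{-1}(\acute{L})\subseteq N_i$ and $f^{-1}(f(N_{i_0}))=N_{i_0}+\ker f$ in (b). Your explicit check that $f^{-1}(\acute{L})$ is proper is a small addition the paper omits, but the substance is identical.
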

\begin{proof}
(a) Let  $\{N_i\}_{i \in I}$ be a family of submodules of $M$. Clearly, $f(\cap_{i\in I}(N_i+Ker f))\subseteq \cap_{i\in I}f(N_i+Ker f)$ and $\cap_{i\in I}f(N_i+Ker f)=\cap_{i\in I}f(N_i)$. Now let $x \in  \cap_{i\in I}f(N_i)$. Then for each $i \in I$ we have $x=f(n_i)$ for some $n_i \in N_i$. Now for each $j \in I$, we have $x=f(n_i)=f(n_j)$. This implies that $n_i-n_j \in Ker f$ and so $n_i+Ker f=n_j+Ker f$. It follows that
 $n_i+Ker f \in \cap_{i\in I}(N_i+Ker f)$. Thus $x=f(n_i+Ker f) \in f(\cap_{i\in I}(N_i+Ker f))$,
as needed.

(b) Let $f$ be an epimorphism and $\acute{L}$ be a completely irreducible submodule of $\acute{M}$. Assume that $\{N_i\}_{i \in I}$ is a collection of submodules of $M$ such that $f^{-1}(\acute{L})=\cap_{i \in I}N_i$. Clearly, $Ker f \subseteq f^{-1}(\acute{L}) \subseteq N_i$ for each $i \in I$. Then as $f$ is epic and part (a), we have
$\acute{L}=f(f^{-1}(\acute{L}))=f(\cap_{i \in I}N_i)=f(\cap_{i \in I}(N_i+Ker f))=\cap_{i \in I}f(N_i)$. Now $\acute{L}$ is a completely irreducible submodule of $\acute{M}$ implies that $\acute{L}=f(N_j)$ for some $j \in I$. Thus $f^{-1}(\acute{L})=f^{-1}f(N_j)=N_j+Ker f=N_j$, as required.
\end{proof}

\begin{prop}\label{8}
Let $f : M\rightarrow \acute{M}$ be an $R$-epimorphism. If $M$ is a
comorphic $R$-module, then so is $\acute{M}$.
\end{prop}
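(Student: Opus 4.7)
The plan is to use Lemma \ref{7}(b) to pull a completely irreducible submodule of $\acute{M}$ back to one of $M$, then exploit the comorphic property on $M$ and push the witness element $a\in R$ forward through $f$. So I start with an arbitrary completely irreducible submodule $\acute{L}$ of $\acute{M}$. By Lemma \ref{7}(b), $N:=f^{-1}(\acute{L})$ is a completely irreducible submodule of $M$, so the comorphic hypothesis on $M$ yields some $a\in R$ with $N=aM$ and $(N:_RM)=Ra+Ann_R(M)$.

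Next I would verify the two required equalities for $\acute{M}$. Using surjectivity of $f$, $\acute{L}=f(f^{-1}(\acute{L}))=f(aM)=a f(M)=a\acute{M}$, which gives the first half of the comorphic condition. For the colon ideal, the key observation is that for any $r\in R$,
\[
r\acute{M}\subseteq\acute{L}\ \Longleftrightarrow\ f(rM)\subseteq\acute{L}\ \Longleftrightarrow\ rM\subseteq f^{-1}(\acute{L})=N,
\]
so $(\acute{L}:_R\acute{M})=(N:_RM)=Ra+Ann_R(M)$.

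Finally I must reconcile $Ra+Ann_R(M)$ with the desired $Ra+Ann_R(\acute{M})$. Since $f$ is surjective, $Ann_R(M)\subseteq Ann_R(\acute{M})$, giving one inclusion $Ra+Ann_R(M)\subseteq Ra+Ann_R(\acute{M})$. For the reverse, both $a$ and every element of $Ann_R(\acute{M})$ lie in $(\acute{L}:_R\acute{M})$ (the former because $a\acute{M}=\acute{L}$), so $Ra+Ann_R(\acute{M})\subseteq(\acute{L}:_R\acute{M})=Ra+Ann_R(M)$. Chaining these inclusions with the equality established above forces $(\acute{L}:_R\acute{M})=Ra+Ann_R(\acute{M})$, completing the verification.

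I do not expect any serious obstacle here: all the structural work is absorbed by Lemma \ref{7}(b), and the only point that needs a moment's care is the swap from $Ann_R(M)$ to $Ann_R(\acute{M})$ in the colon-ideal equality, which is handled by the two-sided inclusion above.
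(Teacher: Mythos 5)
Your proof is correct and follows essentially the same route as the paper's: pull $\acute{L}$ back via Lemma \ref{7}(b), apply the comorphic hypothesis on $M$ to get the witness $a$, push it forward using surjectivity, and reconcile $Ra+Ann_R(M)$ with $Ra+Ann_R(\acute{M})$ by a two-sided inclusion. The paper obtains the inclusion $Ann_R(\acute{M})\subseteq Ra+Ann_R(M)$ via $t\acute{M}=0\Rightarrow tM\subseteq Ker(f)\subseteq f^{-1}(\acute{L})$ rather than through $(\acute{L}:_R\acute{M})$, but this is the same idea in substance.
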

\begin{proof}
Let $M$ be a comorphic $R$-module and $\acute{L}$ be a completely irreducible submodule of $\acute{M}$. Then by Lemma \ref{7}(b),
$f^{-1}(\acute{L})$ is a completely irreducible submodule of $M$. Thus by assumption, there exists $a \in R$ such that
$f^{-1}(\acute{L})= aM$ and $(f^{-1}(\acute{L}):_RM)= Ra + Ann_R(M)$.
On the other hand if $t \acute{M}=0$, then $tM \subseteq Ker(f) \subseteq f^{-1}(\acute{L})$. Thus $ t \in Ra+ Ann_R(M)$ and so $Ann_R(\acute{M}) \subseteq Ra+ Ann_R(M)$. This implies that $Ra + Ann_R(M)=Ra+ Ann_R(\acute{M})$.
Now as $f$ is an epimorphism,  $\acute{L}=af(M)=a\acute{M}$ and $(\acute{L}:_R\acute{M})= Ra+Ann_R(\acute{M})$.
\end{proof}

A non-zero submodule $S$ of an $R$-module $M$ is said to be \emph{second} if for each $a \in R$, the homomorphism $ S \stackrel {a} \rightarrow S$  which is defined by multiplication by $a$, is either surjective or zero \cite{Y01}.
\begin{prop}\label{8}
An $R$-module $M$ is a second comorphic module if and only if $M$ is a simple $R$-module.
\end{prop}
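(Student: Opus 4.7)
The plan is to handle each direction with an easy computation, using only the definitions of second, comorphic, and completely irreducible submodule, together with the basic fact (stated in the introduction) that every submodule of $M$ is an intersection of completely irreducible submodules.

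For the forward direction, I would assume $M$ is second and comorphic, and take any completely irreducible submodule $L$ of $M$. By the comorphic hypothesis, $L=aM$ for some $a\in R$. Since $M$ is second, $aM$ is either $M$ or $0$; but completely irreducible submodules are by definition proper, so $L\neq M$, forcing $L=0$. Thus $0$ is the only possible completely irreducible submodule. It actually occurs: if $M$ had no completely irreducible submodule, then writing every submodule as an intersection of completely irreducible submodules would leave only the empty intersection $M$ as a submodule of $M$, forcing $M=0$ and contradicting that $M$ is second (hence nonzero). Therefore every proper submodule of $M$ is an intersection of copies of $0$, i.e.\ equals $0$, and $M$ is simple.

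For the reverse direction, if $M$ is simple, then for each $a\in R$ the submodule $aM$ is $0$ or $M$, so $M$ is second. The only completely irreducible submodule is $0$, and taking $a=0$ gives $0=aM$ and $(0:_RM)=\mathrm{Ann}_R(M)=R\cdot 0+\mathrm{Ann}_R(M)$, so $M$ is comorphic.

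There is essentially no obstacle here; the only point that needs a line of care is guaranteeing the existence of at least one completely irreducible submodule of $M$ (so that the forward argument has something to apply to), which is handled by the contradiction with $M\neq 0$ sketched above.
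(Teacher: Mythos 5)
Your proof is correct and follows essentially the same route as the paper: the converse direction hinges on writing a completely irreducible submodule $L$ as $aM$ and invoking secondness to force $aM\in\{0,M\}$, hence $L=0$ by properness. You simply supply details the paper leaves implicit (that every submodule is an intersection of completely irreducible ones, and that at least one such submodule exists), which is a welcome but not substantively different elaboration.
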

\begin{proof}
If $M$ is a simple $R$-module, then clearly $M$ is a second comorphic $R$-module. Conversely, let $M$ be a second comorphic $R$-module and $L$ be a completely irreducible submodule of $M$. Then there exists $a \in R$ such that $L=aM$ and $(L:_RM)= Ra +Ann_R(M)$. As $M$ is second, $aM=0$ or $aM=M$. Thus $M$ is a simple $R$-module.
\end{proof}

The following example is illustrating that the notions of morphic and comorphic for modules do not  generally coincide.
\begin{ex}\label{9}
For a prime number $p$, the $\Bbb Z$-module $\Bbb Z_{p^\infty}$ is not a comorphic $\Bbb Z$-module, but it is a morphic $\Bbb Z$-module. Also, the $\Bbb Z$-module $\Bbb Z$ is a comorphic $\Bbb Z$-module, but it is not a morphic $\Bbb Z$-module.
\end{ex}

\begin{defn}\label{11}
We say that a submodule $N$ of an $R$-module $M$ is a \textit{co-Baer submodule} if for each completely irreducible submodule $L$ of $M$ with $N\subseteq L$, we have $N\subseteq (L:_RM)M$.
\end{defn}

\begin{prop}\label{12}
Let $M$ be a comorphic $R$-module. Then we have the following.
\begin{itemize}
\item [(a)] There exists $a \in R$ such that $N=(N:_RM)M=aM$ for each submodule $N$ of $M$ with $M/N$ is finitely cogenerated  $R$-module.
\item [(b)] Every submodule $N$ of $M$ is a co-Baer submodule.
\end{itemize}
\end{prop}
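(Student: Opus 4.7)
The plan is to reduce both parts to direct consequences of Theorem \ref{5} (and the defining property of comorphic modules) together with the simple identity $(Ra+Ann_R(M))M=aM$.

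For part (a), I would start by letting $N$ be a submodule of $M$ with $M/N$ finitely cogenerated. Applying Theorem \ref{5} (which is available since $M$ is comorphic), I obtain some $a\in R$ with
\[
N=aM \qquad\text{and}\qquad (N:_RM)=Ra+Ann_R(M).
\]
From here the computation is essentially one line: $(N:_RM)M=(Ra+Ann_R(M))M=RaM+Ann_R(M)M=aM=N$, which delivers the equality $N=(N:_RM)M=aM$ claimed in (a).

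For part (b), I would take an arbitrary submodule $N$ of $M$ and check the condition in Definition \ref{11}. Let $L$ be a completely irreducible submodule of $M$ with $N\subseteq L$. Since $M$ is comorphic, Definition \ref{1} yields $b\in R$ with $L=bM$ and $(L:_RM)=Rb+Ann_R(M)$. The same identity as above gives $(L:_RM)M=(Rb+Ann_R(M))M=bM=L$, and hence $N\subseteq L=(L:_RM)M$, proving that $N$ is a co-Baer submodule.

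There is no real obstacle here; the main point to be careful about is that the equality $(Ra+Ann_R(M))M=aM$ uses $Ann_R(M)M=0$, which is automatic, and that one must invoke the correct source (Theorem \ref{5} for part (a), where $M/N$ need not be a single completely irreducible quotient, and Definition \ref{1} for part (b), where $L$ is completely irreducible). In both parts the coefficient $a$ (respectively $b$) depends on the submodule, which is consistent with the existential quantifier in the statement of (a).
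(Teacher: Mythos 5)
Your proof is correct and follows essentially the same route as the paper: part (a) is Theorem \ref{5} plus the observation $(Ra+Ann_R(M))M=aM$, and part (b) reduces to $L=(L:_RM)M$ for a completely irreducible $L$. The only cosmetic difference is that the paper derives $L=(L:_RM)M$ in (b) by citing part (a), whereas you invoke Definition \ref{1} directly; both amount to the same computation.
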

\begin{proof}
(a) Suppose that $N$ is a submodule of $M$ with $M/N$ is finitely cogenerated $R$-module. By Theorem \ref{5},
$N = aM$ and $(N:_RM)=Ra + Ann_R(M)$ for some $a \in R$. Thus $N = aM$ and $(N:_RM)M=aM$. Hence, $N=(N:_RM)M$.

(b) Let $L$ be a completely irreducible submodule of $M$ with $N\subseteq L$. Since $L$ is completely irreducible submodule of $M$,
$L=(L:_RM)M$ by part (a). Thus $N\subseteq (L:_RM)M$.
\end{proof}

\begin{defn}\label{11}
We say that an $R$-module $M$ is a \textit{co-Bézout module} if every submodule $N$ of $M$ with $M/N$ is finitely cogenerated is a completely irreducible submodule of $M$.
\end{defn}

\begin{thm}\label{10}
Let $M$ be a comorphic $R$-module which satisfies the $DAC$ and $I^M_0(M)\not=0$. Then
\begin{itemize}
\item [(a)] $R$ is a morphic ring.
\item [(b)] $M$ is a co-Bézout module.
\end{itemize}
\end{thm}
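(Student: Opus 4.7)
The plan is to extract two structural consequences from the hypothesis $I^M_0(M) \neq 0$ and then feed them into the comorphic condition and $DAC$. First, $M$ must be faithful: if there were $0 \neq r \in Ann_R(M)$, then $rM = 0$ would lie inside every completely irreducible submodule $L$, making every such $L$ belong to the collection defining $I^M_0(M)$; since $0$ is itself an intersection of completely irreducible submodules, this would force $I^M_0(M) = 0$. Second, $M$ must be cocyclic, i.e., $0$ is itself a completely irreducible submodule. Pick $0 \neq x \in I^M_0(M)$; for any completely irreducible $L$ with $x \notin L$, no $0 \neq r \in R$ can satisfy $rM \subseteq L$, so $(L:_R M) = 0$. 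The comorphic condition together with faithfulness then yields $L = a_L M$ with $R a_L = 0$, hence $L = 0$. If $0$ were not completely irreducible, every completely irreducible submodule would contain $x$, contradicting that their intersection is $0$.

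Once $0$ is completely irreducible, Remark \ref{r2.1}(a) delivers the fact underlying both parts: $0:_M a$ is completely irreducible for every $a \in R$. For part (a), fix $a \in R$ and apply the comorphic condition to $0:_M a$ to obtain $b \in R$ with $0:_M a = bM$ and $((0:_M a):_R M) = Rb$. A short computation using faithfulness gives $((0:_M a):_R M) = \{r \in R : ar \in Ann_R(M)\} = Ann_R(a)$, so $Ann_R(a) = Rb$. Applying $DAC$ to the ideal $Ra$ then yields $Ra = Ann_R(0:_M a) = Ann_R(bM) = Ann_R(b)$. Hence $Ra = Ann_R(Rb)$ and $Ann_R(Ra) = Rb$, and \cite[Lemma~1]{MR2022487} implies that $R$ is morphic.

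For part (b), let $N \subseteq M$ be such that $M/N$ is finitely cogenerated. Theorem \ref{5} provides $a \in R$ with $N = aM$ and $(N:_R M) = Ra$. Using part (a), pick $b \in R$ with $Rb = Ann_R(a)$ and $Ra = Ann_R(b)$. The submodule $0:_M b$ is completely irreducible by the observation above, so the comorphic condition produces $c \in R$ with $0:_M b = cM$ and $((0:_M b):_R M) = Rc$; repeating the previous computation gives $Rc = Ann_R(b) = Ra$. In particular $cM = aM$, so $N = aM = 0:_M b$ is completely irreducible, and $M$ is therefore a co-Bézout module.

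The main hurdle is the second observation in the first paragraph, namely squeezing cocyclicity of $M$ out of $I^M_0(M) \neq 0$. Once that is in hand, the rest dualizes cleanly: the chain \emph{``$0$ is completely irreducible'' $\Rightarrow$ ``$0:_M a$ is completely irreducible'' $\Rightarrow$ ``comorphic applies to $0:_M a$''} combined with $DAC$ supplies the annihilator identifications in both directions, mirroring the structure of the corresponding result in \cite{BTTK2022}.
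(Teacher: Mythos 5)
Your proof is correct, but it takes a genuinely different route from the paper's. The paper fixes a completely irreducible submodule $L$ with $I^M_0(M)\not\subseteq L$ and works throughout with the colon submodules $(L:_Ma)$, using the non-containment to convert $((L:_Ma):_RM)$ into $Ann_R(Ra)$ and to deduce $ab=0$ from $abM\subseteq L$; in part (b) it identifies $N$ with $(L:_Mb)$ via Proposition \ref{12}. You instead extract two structural facts first --- $M$ is faithful and, more substantially, $0$ is itself completely irreducible (i.e.\ $M$ is cocyclic) --- and then run everything through $(0:_Ma)$: the comorphic condition gives $Ann_R(Ra)=Rb$ directly, and the $DAC$ gives $Ra=Ann_R(0:_Ma)=Ann_R(Rb)$ cleanly, with part (b) identifying $N$ with $(0:_Mb)$ rather than $(L:_Mb)$. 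Your cocyclicity argument is sound: any completely irreducible $L$ with $x\notin L$ (for $0\neq x\in I^M_0(M)$) has $(L:_RM)=0$, hence equals $0M=0$ by the comorphic condition and faithfulness, and such an $L$ must exist since $0$ is an intersection of completely irreducible submodules. What your approach buys is transparency --- it isolates cocyclicity as an explicit consequence of $I^M_0(M)\neq 0$ plus comorphicity, and it avoids the rather opaque annihilator manipulations in the paper's part (a); what it costs is nothing essential, though you should note the degenerate case $a=0$ (where $(0:_Ma)=M$ is not proper, so Remark \ref{r2.1}(a) does not apply and one takes $b=1$ directly), a case the paper's proof silently skips as well.
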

\begin{proof}
(a) As $I^M_0(M)\not=0$, there exists a completely irreducible submodule $L$ of $M$ such that $I^M_0(M)\not\subseteq L$ and $Ann_R(M)=0$. Let $a \in R$. Since $M$ is a comorphic $R$-module, there exists $b \in R$ such that
$(L:_Ma)=bM$ and $((L:_Ma):_RM)=Rb+Ann_R(M)=Rb$. As $I^M_0(M)\not=0$, $((L:_Ma):_RM)=Ann_R(Ra)$. Thus $Ann_R(Ra)=Rb$. On the other hand $(L:_Ma)=bM$ implies that $Ann_R((L:_Ma))=Ann_R(Rb)$. Hence, $Ann_R(Rb)\subseteq Ann_R((0:_MAnn_R(L)a)=Ann_R(L)a\subseteq Ra$. We have $(L:_Ma)=bM$ follows that $abM \subseteq L$. Since  $I^M_0(M)\not\subseteq L$, we have $ab=0$ and so $Ra \subseteq Ann_R(Rb)$. Therefore, $R$ is a morphic ring.

(b) Let $N$ be a submodule of $M$ with $M/N$ be a finitely cogenerated $R$-module and $L$ be a completely irreducible submodule of $M$ with $I^M_0(M)\not\subseteq L$. Since
$Ann_R(M) = 0$, by Theorem \ref{5}, $N = aM$ and $(N:_RM)=Ra$ for some $a \in R$. By part (a), $R$ is a morphic ring and so
there exists $b\in R$ such that $Ra = Ann_R(Rb)$ and $Rb = Ann_R(Ra)$.  Since $I^M_0(M)\not\subseteq L$, we have
$Ann_R(Rb)=((L:_Rb):_RM)$.
By Proposition \ref{12}, $((L:_Mb):_RM)M=(L:_Mb)$ and $N=(N:_RM)M$. Thus
$$
N=aM=Ann_R(Rb)M=((L:_Mb):_RM)M=(L:_Mb).
$$
Therefore, $M$ is a co-Bézout $R$-module by Remark \ref{r2.1} (a).
\end{proof}

\begin{thm}\label{14}
Let $M$ be a comorphic $R$-module. Consider the following conditions:
\begin{itemize}
\item [(a)] $M$ satisfies the $ACC$ on completely irreducible submodules.
\item [(b)] $R$ satisfies the $ACC$ on $\{(L:_RM) :L\ is \ a \ completely\ irreducible\ submodule \ of\ M\}$.
\item [(c)] $M$ satisfies the $ACC$ on submodules $N$ with $M/N$ is finitely cogenerated.
\item [(d)] $R$ satisfies the $ACC$ on $\{(N:_RM) :M/N\ is \ a \ finitely\ cogenerated\ module\}$.
\end{itemize}
Then, $(a) \Leftrightarrow (b) \Leftarrow (c) \Leftrightarrow (d)$. Furthermore, if $M$ satisfies the $DAC$ and $I^M_0(M)\not=0$, then
$(a) \Leftrightarrow (b) \Leftrightarrow (c) \Leftrightarrow (d)$.
\end{thm}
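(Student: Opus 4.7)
The plan is to exploit the comorphic structure of $M$, which yields an inclusion-preserving correspondence between certain submodules of $M$ and certain ideals of $R$ via the contraction map $N \mapsto (N:_R M)$. The two tools I would rely on are Theorem \ref{5}, which attaches to every submodule $N$ with $M/N$ finitely cogenerated an element $a \in R$ satisfying $N = aM$ and $(N:_R M) = Ra + Ann_R(M)$, and Proposition \ref{12}(a), whose content for the present purpose is the identity $N = (N:_R M)M$ for every such $N$. Note also that every completely irreducible submodule $L$ has $M/L$ cocyclic, hence finitely cogenerated, so both tools apply to completely irreducible submodules as well.

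First I would prove $(a) \Leftrightarrow (b)$. Given a chain of completely irreducible submodules $L_1 \subseteq L_2 \subseteq \cdots$, contracting yields an ascending chain of ideals $(L_1 :_R M) \subseteq (L_2 :_R M) \subseteq \cdots$ lying in the set of $(b)$, so $(b)$ forces stabilization at some step $n$; applying Proposition \ref{12}(a) then gives $L_i = (L_i:_R M)M = (L_n:_R M)M = L_n$ for $i \geq n$. Conversely, an ascending chain of ideals $(L_i:_R M)$ with each $L_i$ completely irreducible lifts, via $L_i = (L_i:_R M)M$, to an ascending chain of the $L_i$, whose stabilization (guaranteed by $(a)$) returns stabilization of the ideal chain. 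The identical argument, now applied to the larger class of submodules $N$ with $M/N$ finitely cogenerated using Theorem \ref{5}, yields $(c) \Leftrightarrow (d)$.

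The implication $(c) \Rightarrow (a)$ is then immediate: any ascending chain of completely irreducible submodules is \emph{a fortiori} an ascending chain of submodules $N$ with $M/N$ finitely cogenerated, and so stabilizes under $(c)$. For the furthermore part I would invoke Theorem \ref{10}(b): under the extra hypotheses that $M$ satisfies the $DAC$ and $I^M_0(M) \neq 0$, $M$ is a co-Bézout module, i.e.\ every submodule $N$ with $M/N$ finitely cogenerated is already completely irreducible. Consequently the two classes of submodules featuring in $(a)$ and $(c)$ coincide, the implication $(a) \Rightarrow (c)$ becomes trivial, and the loop $(a) \Leftrightarrow (b) \Leftrightarrow (c) \Leftrightarrow (d)$ closes.

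I do not expect any serious obstacle beyond careful bookkeeping: the only nontrivial ingredient is that $N \mapsto (N:_R M)$ is strictly inclusion-preserving on the relevant classes, and this is precisely what Proposition \ref{12}(a) delivers. If there is a subtle point at all, it is remembering that a completely irreducible submodule automatically has finitely cogenerated quotient, so that the tools designed for the broader class of $(c)$--$(d)$ apply to the narrower class of $(a)$--$(b)$ without any additional hypothesis.
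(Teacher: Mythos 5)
Your proposal is correct and follows essentially the same route as the paper: both directions of $(a)\Leftrightarrow(b)$ and $(c)\Leftrightarrow(d)$ rest on the identity $N=(N:_RM)M$ from Proposition \ref{12}(a) to pass back and forth between chains of submodules and chains of their contractions, $(c)\Rightarrow(a)$ is immediate because completely irreducible submodules have cocyclic (hence finitely cogenerated) quotients, and the furthermore clause is closed by the co-B\'ezout conclusion of Theorem \ref{10}(b). No gaps.
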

\begin{proof}
$(a)\Rightarrow (b)$
Consider the following ascending chain of ideals of $R$
$$
(L_1:_RM) \subseteq (L_2:_RM)  \subseteq \cdots \subseteq (L_n:_RM)  \subseteq \cdots,
$$
where $L_i$ is a completely irreducible submodule of $M$. By Proposition \ref{12}, $L_i= (L_i:_RM)M$ for each $i$. Thus we
have the following ascending chain of completely irreducible submodules of $M$
$$
L_1 \subseteq L_2\subseteq \cdots \subseteq L_n\subseteq \cdots .
$$
Thus, by hypothesis, there exists an integer $k \geq 1$ such that $L_k= L_{k+n}$ for each
$n \geq 0$. This implies that $(L_k:_RM) =(L_{n+k}:_RM)$ for each
$n \geq 0$, as needed.

$(b) \Rightarrow (a)$
This is similar to $(a) \Rightarrow (b)$.

$(a) \Leftrightarrow (d)$ This is similar to $(a) \Leftrightarrow (b)$) (by using Proposition \ref{12}).

$(c) \Rightarrow (a)$ This is clear.

If $M$ satisfies the $DAC $ and $I^M_0(M)\not=0$, then the result follows from Theorem \ref{10} (b).
\end{proof}

\begin{lem}\label{15}
Let $M$ be a finitely generated and finitely cogenerated $R$-module such that $(0:_Ma)=0$ for some $a \in R$. Then there exists $t \in R$ such that $(1-ta)M=0$.
\end{lem}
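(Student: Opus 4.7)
My plan is to reduce the conclusion to the statement that $\bar a := a + \operatorname{Ann}_R(M)$ is a unit in the quotient ring $\bar R := R/\operatorname{Ann}_R(M)$. Indeed, once I produce $\bar t \in \bar R$ with $\bar t \bar a = 1$, any lift $t \in R$ will satisfy $ta - 1 \in \operatorname{Ann}_R(M)$, that is, $(1-ta)M = 0$. So the task becomes to show that $\bar a$ lies outside every maximal ideal of $\bar R$.

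I would first unpack the finitely cogenerated hypothesis: the socle $\operatorname{soc}(M)$ is essential in $M$ and finitely generated, so it decomposes as $\operatorname{soc}(M) = \bigoplus_{j=1}^{k} S_j$, a finite direct sum of simple submodules $S_j$ with maximal annihilators $\mathfrak{m}_j := \operatorname{Ann}_R(S_j)$. The injectivity hypothesis $(0:_M a) = 0$ immediately yields $a \notin \mathfrak{m}_j$ for every $j$, since $a \in \mathfrak{m}_j$ would force $a \cdot S_j = 0$ and hence put the nonzero $S_j$ inside $(0:_M a)$.

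The heart of the argument is to identify $\{\bar{\mathfrak{m}}_1,\ldots,\bar{\mathfrak{m}}_k\}$ with the full set of maximal ideals of $\bar R$. Finite generation of $M$ gives $\operatorname{Supp}_R(M) = V(\operatorname{Ann}_R(M))$, so maximal ideals of $\bar R$ correspond to maximal ideals of $R$ containing $\operatorname{Ann}_R(M)$. I would argue that $M$ is of finite length, by combining finite generation with finite cogeneration via a descending chain argument: for any chain $N_1 \supseteq N_2 \supseteq \cdots$ with intersection $N$, the quotient $M/N$ inherits finite cogeneration (using the embedding of $M$ into $\bigoplus_{j=1}^k E(S_j)$, which restricts to an analogous embedding for $M/N$), so the family $\{N_i/N\}$, having intersection zero in $M/N$, must already have a finite subfamily with intersection zero. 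Since the chain is descending, this forces stabilization, so $M$ is Artinian and, combined with finite generation, has finite length. A composition series then has all its factors isomorphic to $R/\mathfrak{m}_j$'s, whence $\operatorname{Ann}_R(M) \supseteq \mathfrak{m}_{j_1}\mathfrak{m}_{j_2}\cdots\mathfrak{m}_{j_\ell}$, and consequently any maximal ideal $\mathfrak{n}$ of $R$ containing $\operatorname{Ann}_R(M)$ must contain, hence equal, one of the $\mathfrak{m}_j$'s.

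Assembling the two inputs, $\bar a$ avoids every maximal ideal of $\bar R$ and is therefore a unit, producing the required $t$. The main obstacle in this outline is the verification that the finite cogeneration of $M$ passes to the quotients $M/N$, which is exactly what powers the finite length conclusion; once that point is established, the identification of $\operatorname{Max}(\bar R)$ and the unit statement for $\bar a$ both follow routinely.
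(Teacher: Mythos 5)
Your opening reduction (it suffices to show that $a$ becomes a unit modulo $Ann_R(M)$) and the observation that $(0:_Ma)=0$ forces $a\notin\mathfrak{m}_j$ for each simple summand $S_j$ of the socle are both correct. The gap is in the step that is supposed to power everything else: the claim that $M$ has finite length, justified by asserting that finite cogeneration passes from $M$ to the quotients $M/N$ via the embedding $M\hookrightarrow\bigoplus_{j=1}^{k}E(S_j)$. An embedding of $M$ induces nothing on a quotient of $M$; in fact, a module all of whose quotients are finitely cogenerated is precisely an Artinian module, which is strictly stronger than being finitely cogenerated. Worse, the intermediate statement itself is false over an arbitrary commutative ring: let $V$ be a rank-one non-discrete valuation domain (value group $\mathbb{Q}$, say) with valuation $\nu$, and put $I=\{v\in V:\nu(v)>1\}$ and $M=V/I$. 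Then $M$ is cyclic, and its socle $\{v:\nu(v)\geq 1\}/I$ is simple and contained in every nonzero submodule, so $M$ is finitely generated and finitely cogenerated; yet the chain $\{v:\nu(v)\geq 1-1/n\}/I$ is strictly descending, so $M$ is not Artinian. (Over a Noetherian ring your finite-length claim does hold, but the lemma is stated for arbitrary commutative $R$.) Since your identification of the maximal ideals of $R/Ann_R(M)$ with the $\mathfrak{m}_j$ is deduced from a composition series, the heart of the argument is unsupported, and I do not see how to recover that identification without essentially reproving the lemma by other means.

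For comparison, the paper avoids any analysis of $\mathrm{Max}(R/Ann_R(M))$: it localizes at the multiplicative set $S=1+Ra$, so that $S^{-1}(Ra)$ lies in the Jacobson radical of $S^{-1}R$; the hypothesis gives $(0:_{S^{-1}M}S^{-1}(Ra))=0$, and the dual Nakayama lemma for finitely cogenerated modules then forces $S^{-1}M=0$; finally, finite generation of $M$ yields some $s=1+ra\in S$ with $sM=0$, i.e. $(1-ta)M=0$ with $t=-r$. The localization at $1+Ra$ is exactly the device that replaces the finite-length/semilocality step your outline relies on, so if you want to salvage your write-up, that is the ingredient to import.
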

\begin{proof}
Set $S:= 1 +Ra$. Clearly, $S$ is
a multiplicatively closed subset of $R$ and $S^{-1}(Ra)$ is a unique maximal ideal of $S^{-1}R$ by \cite[Exe. 2, page 43]{MR3525784}.  We have $(0 :_{S^{-1}(M)} S^{-1}(Ra)) = 0$.  Since $S^{-1}(M)$ is finitely cogenerated $S^{-1}R$-module $S^{-1}(M) = 0$ by \cite[Theorem 3.14]{AF08}. Now since $M$ is finitely generated, there exists an element $t \in R$
such that $(1-ta)(M) = 0$ by \cite[Exe. 1, page 43]{MR3525784}.
\end{proof}

\begin{thm}\label{16}
Let $M$ be a finitely generated $R$-module such that for each submodule $N$ of $M$ with $M/N$ is a finitely cogenerated $R$-module there exists $a \in R$ such that $N=(0:_Ma)=(0:_Ma^2)$. Then $M$ is a comorphic $R$-module.
\end{thm}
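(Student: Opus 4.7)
The plan is to verify condition (b) of Theorem~\ref{5}. I would fix a submodule $N \subseteq M$ with $M/N$ finitely cogenerated, use the hypothesis to select $a \in R$ with $N = (0:_M a) = (0:_M a^2)$, and aim to produce $t \in R$ so that $c := 1 - ta$ satisfies both $N = cM$ and $(N :_R M) = Rc + Ann_R(M)$.

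The first step is to bring Lemma~\ref{15} into play by studying the submodule $aM$. It is finitely generated since $M$ is, and the surjection $M \to aM$, $m \mapsto am$, has kernel $(0:_M a) = N$, so $aM \cong M/N$ is finitely cogenerated as well. For the third hypothesis of Lemma~\ref{15}, any element of $(0:_{aM} a)$ has the form $am$ with $a^2 m = 0$, i.e.\ with $m \in (0:_M a^2) = (0:_M a) = N$, which forces $am = 0$. Hence $(0:_{aM} a) = 0$, and Lemma~\ref{15} yields $t \in R$ with $(1-ta)(aM) = 0$, equivalently $(a - ta^2) \in Ann_R(M)$.

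From here the verification of the two required equalities is essentially mechanical. For $N = cM$ with $c = 1-ta$: any $m \in M$ satisfies $a(1-ta)m = (a-ta^2)m = 0$, so $(1-ta)m \in (0:_M a) = N$, giving $cM \subseteq N$; conversely, if $m \in N$ then $am = 0$, hence $tam = 0$ and $m = (1-ta)m \in cM$. For the colon ideal formula, the inclusion $Rc + Ann_R(M) \subseteq (N :_R M)$ is immediate from $cM = N$, and for any $s \in (N :_R M)$ we have $saM = 0$, so $sa \in Ann_R(M)$, whence the identity $s = s(1-ta) + t(sa)$ places $s$ in $Rc + Ann_R(M)$. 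Theorem~\ref{5} then concludes that $M$ is comorphic.

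The only step with real content is the application of Lemma~\ref{15}; the twin hypotheses $aN = 0$ and $(0:_M a^2) = (0:_M a)$ are used in concert precisely to kill $(0:_{aM} a)$ so that the lemma applies. Once $1 - ta$ is in hand, it essentially inverts the action of $a$ on $aM$, and the identification $N = (1-ta)M$ together with the colon-ideal decomposition is routine bookkeeping.
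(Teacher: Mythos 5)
Your proposal is correct and takes essentially the same route as the paper: both produce the element $1-ta$ by applying Lemma~\ref{15} (the paper to $M/N$, you to the isomorphic module $aM$) and then verify condition (b) of Theorem~\ref{5} for $c=1-ta$. The only cosmetic difference is that you check $N=(1-ta)M$ directly, whereas the paper routes the inclusion $N\subseteq (1-ta)M$ through completely irreducible submodules via Remark~\ref{r2.1}(b); both verifications are valid.
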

\begin{proof}
Let $N$ be a submodule of $M$ with $M/N$ be a finitely cogenerated $R$-module and $N=(0:_Ma)=(0:_Ma^2)$ for some $a \in R$.
Then $(0:_{M/N}a)=0$.  Thus by Lemma \ref{15}, there exists an element $t \in R$
such that $(1-ta) \in (N:_RM)$. It follows that $R(1-ta)+Ann_R(M)\subseteq (N:_RM)$.
Now, let $b \in  (N:_RM)$. Then, $bM \subseteq N$ and so
$tab \in Ann_R(M)$. Thus $b = b(1 -ta)+tba \in R(1-ta) + Ann_R(M)$.
Hence,  $(N:_RM)=R(1-ta)+Ann_R(M)$. Let $L$ be a completely irreducible submodule of $M$ such that $(1 - ta)M \subseteq L$
and $y \in N$. Then $(1-ta)y \in L$ and $ya=0$. Thus $y \in L$ and so $N\subseteq L$. Therefore $N \subseteq (1 - ta)M $ by Remark \ref{r2.1} (b). Hence $N = (1 - ta)M $, as needed.
\end{proof}

\begin{prop}\label{17}
Let $M$ be a finite length comorphic $R$-module and $S$ be a multiplicatively closed subset
of $R$. Then $S^{-1}M$ is a comorphic  $S^{-1}R$-module.
\end{prop}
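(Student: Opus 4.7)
The plan is to verify Definition \ref{1} for $S^{-1}M$ by contracting each completely irreducible submodule back to $M$ and invoking the characterization in Theorem \ref{5}. Let $\acute{L}$ be a completely irreducible submodule of $S^{-1}M$, write $\phi : M \to S^{-1}M$ for the canonical map, and set $L := \phi^{-1}(\acute{L})$. The first step is the standard identity $\acute{L} = S^{-1}L$: if $x/s \in \acute{L}$ then $x/1 = s \cdot (x/s) \in \acute{L}$, so $x \in L$ and hence $x/s \in S^{-1}L$; the reverse inclusion is automatic.

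Next, since $M$ has finite length, so does $M/L$, and every finite-length module is finitely cogenerated. Consequently the stronger characterization of Theorem \ref{5}(b) applies — not merely the completely irreducible case in Definition \ref{1} — and produces an element $a \in R$ with $L = aM$ and $(L :_R M) = Ra + Ann_R(M)$. Setting $\alpha = a/1 \in S^{-1}R$, localization of $L = aM$ gives $\acute{L} = S^{-1}L = \alpha \cdot S^{-1}M$, which handles the first half of the comorphic condition.

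It remains to check the colon-ideal identity $(\acute{L} :_{S^{-1}R} S^{-1}M) = S^{-1}R \cdot \alpha + Ann_{S^{-1}R}(S^{-1}M)$. Here I would apply the two standard commutation identities $S^{-1}(L :_R M) = (\acute{L} :_{S^{-1}R} S^{-1}M)$ and $S^{-1} Ann_R(M) = Ann_{S^{-1}R}(S^{-1}M)$, both valid because $M$ is finitely generated (as it has finite length). Combined with $(L :_R M) = Ra + Ann_R(M)$, these give the desired equality upon localizing. This colon-commutation step is the main technical point of the argument: finite length is used exactly twice, once to make $M/L$ finitely cogenerated so that Theorem \ref{5}(b) applies, and once to make $M$ finitely generated so that localization commutes with the colon ideal and the annihilator. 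Beyond that, no delicate analysis is required, since submodules of $S^{-1}M$ are fully determined by their contractions to $M$.
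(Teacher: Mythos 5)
Your proof is correct and follows essentially the same route as the paper's: identify each submodule of $S^{-1}M$ with the localization of its contraction to $M$, apply the comorphic property of $M$ there (via Theorem \ref{5}, using that finite length forces $M/L$ to be finitely cogenerated), and localize the resulting data back. You are in fact more explicit than the paper, which leaves implicit both the appeal to Theorem \ref{5} for an arbitrary submodule and the finite-generation hypothesis needed for localization to commute with the colon ideal and the annihilator.
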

\begin{proof}
First we note that as $M$ is finite length,  $S^{-1}M$  is finite length.
Let $S^{-1}N$ be a submodule of $S^{-1}M$. Since $M$ is comorphic, $N=aM$ and $(N:_RM)=Ra+Ann_R(M)$ for some $a \in R$. This implies that  $S^{-1}N=(a/1)S^{-1}M$ and $(S^{-1}N:_{S^{-1}R}S^{-1}M)=(S^{-1}R)(a/1)+Ann_{S^{-1}R}(S^{-1}M)$, as needed.
\end{proof}

Let $R_i$ be a commutative ring with identity, $M_i$ be an $R_i$-module for each $i = 1, 2$. Assume that
$M = M_1\times M_2$ and $R = R_1\times R_2$. Then $M$ is clearly
an $R$-module with componentwise addition and scalar multiplication. Each submodule $N$ of $M$ is of the form $N = N_1\times N_2$, where $N_i$ is a submodule of $M_i$ for each $i = 1, 2$.

\begin{prop}\label{18}
Let $R = R_1 \times R_2$, where $R_i$ is a commutative ring
with identity and  let $M = M_1 \times M_2$
be an $R$-module, where $M_1$ is an $R_1$-module and $M_2$ is an $R_2$-module. Then the following conditions are equivalent:
\begin{itemize}
  \item [(a)] $M$ is a comorphic $R$-module;
  \item [(b)] $M_i$ is a comorphic $R_i$-module for each $i = 1, 2$.
\end{itemize}
\end{prop}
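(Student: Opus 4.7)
The plan is to reduce the statement to a componentwise analysis using two structural observations about $M = M_1 \times M_2$ as an $R$-module (with $R = R_1 \times R_2$): first, every submodule has the form $N_1 \times N_2$; and second, the completely irreducible submodules of $M$ are precisely those of the form $L_1 \times M_2$ with $L_1$ completely irreducible in $M_1$, or $M_1 \times L_2$ with $L_2$ completely irreducible in $M_2$. The latter follows by observing that an arbitrary $L_1 \times L_2$ can be written as $(L_1 \times M_2) \cap (M_1 \times L_2)$, forcing one factor to be the whole space, and then using the product description of submodules to check that $L_1 \times M_2$ (say) is completely irreducible iff $L_1$ is.

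Next I would collect the componentwise computations that will be used repeatedly. For submodules $N_i \subseteq M_i$, one checks directly that
\[
(N_1 \times N_2 :_R M) = (N_1 :_{R_1} M_1) \times (N_2 :_{R_2} M_2),
\]
and for $(a_1, a_2) \in R$,
\[
R(a_1, a_2) + \mathrm{Ann}_R(M) = \bigl(R_1 a_1 + \mathrm{Ann}_{R_1}(M_1)\bigr) \times \bigl(R_2 a_2 + \mathrm{Ann}_{R_2}(M_2)\bigr),
\]
together with $(a_1, a_2) M = a_1 M_1 \times a_2 M_2$. These are immediate from the componentwise action.

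For the direction (a)$\Rightarrow$(b), I would take a completely irreducible $L_1 \subseteq M_1$, form the completely irreducible submodule $L_1 \times M_2$ of $M$, and apply the comorphic hypothesis to obtain $(a_1, a_2) \in R$ witnessing it. The identity $L_1 \times M_2 = a_1 M_1 \times a_2 M_2$ immediately yields $L_1 = a_1 M_1$, and then matching the first factor in the product decomposition of the colon ideal and of $R(a_1, a_2) + \mathrm{Ann}_R(M)$ gives $(L_1 :_{R_1} M_1) = R_1 a_1 + \mathrm{Ann}_{R_1}(M_1)$, so $M_1$ is comorphic; $M_2$ is handled symmetrically.

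For (b)$\Rightarrow$(a), I would take a completely irreducible submodule of $M$ and, by the structural observation, assume it has the form $L_1 \times M_2$. Applying the comorphic property of $M_1$ yields some $a_1 \in R_1$ with $L_1 = a_1 M_1$ and $(L_1 :_{R_1} M_1) = R_1 a_1 + \mathrm{Ann}_{R_1}(M_1)$. Taking the element $(a_1, 1_{R_2}) \in R$ and substituting into the two componentwise formulas above makes both defining conditions of comorphic drop out. The only step that requires a bit of care is the initial characterization of completely irreducible submodules of a product module; once that is established, everything else is bookkeeping in the two coordinates.
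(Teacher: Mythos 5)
Your proof is correct, but it takes a genuinely different route from the paper. The paper does not work with completely irreducible submodules at all: it invokes the characterization of Theorem \ref{5} and argues with submodules $N$ such that $M/N$ is finitely cogenerated, lifting $N_1\subseteq M_1$ to a submodule of $M$ whose quotient is finitely cogenerated and pushing a product submodule $N_1\times N_2$ down to its factors. You instead argue straight from Definition \ref{1}, which forces you to classify the completely irreducible submodules of $M_1\times M_2$ as exactly those of the form $L_1\times M_2$ or $M_1\times L_2$ with $L_i$ completely irreducible in $M_i$; your sketch of that classification (writing $L_1\times L_2=(L_1\times M_2)\cap(M_1\times L_2)$ and then matching coordinates in an arbitrary intersection) is sound, and the choice of witness $(a_1,1_{R_2})$ in the converse direction checks out against your componentwise formulas. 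What your approach buys is self-containment — it does not lean on Theorem \ref{5}, and it sidesteps a small slip in the paper's own $(a)\Rightarrow(b)$, where the submodule $N_1\times 0$ is used even though $M/(N_1\times 0)\cong (M_1/N_1)\times M_2$ is finitely cogenerated only when $M_2$ is (the intended submodule is $N_1\times M_2$). What the paper's route buys is that it avoids the structure lemma on completely irreducible submodules of a product, since for finitely cogenerated quotients every product $N_1\times N_2$ participates symmetrically. Both arguments are valid; yours is the more elementary of the two.
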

\begin{proof}
$(a)\Rightarrow (b)$
Let $N_1$ be a submodule of $M_1$
with $M_1/N_1$ is a finitely cogenerated $R_1$-module. Then $N_1\times 0$ is a submodule of $M_1\times M_2$ with $M/(N_1 \times 0)$ is a finitely cogenerated $R$-module. Now by part (a), $N_1 \times 0=aM$, where $a=(a_1,a_2)\in R$ and $(N_1\times 0:_RM)=R(a_1, a_2)+Ann_R(M)$.
This implies that $N_1=a_1M_1$ and $(N_1:_{R_1}M_1)=R_1a_1+Ann_{R_1}(M_1)$. Hence $M_1$ is a comorphic $R_1$-module. Similar argument shows that $M_2$ is a comorphic $R_2$-module.

$(b)\Rightarrow (a)$
Let $N=N_1 \times N_2$ be a submodule of $M = M_1 \times M_2$
with $M/N$ be a finitely cogenerated $R$-module. Then $N_i$ is a submodule of $M_i$ with $M_i/N_i$ is a finitely cogenerated $R_i$-module for each $i = 1, 2$. Thus by part (b), $N_i=a_iM_i$ and $(N_i:_{R_i}M_i)=R_ia_i+Ann_{R_i}(M_i)$ for each $i = 1, 2$.
Therefore, $N = N_1 \times N_2=(a_1, a_2)(M_1 \times M_2)$ and $(N_1 \times N_2:_{R_1 \times R_2}M_1 \times M_2)=(R_1 \times R_2)(a_1, a_2)+Ann_{R_1 \times R_2}(M_1 \times M_2)$ as required.
\end{proof}

Recall that an $R$-module $M$ is said to be \textit{Hopfian} (resp. \textit{co-Hopfian})
if every surjective endomorphism (resp.  injective endomorphism) $f$ of $M$  is an isomorphism \cite{Hir} (resp. \cite{Vara}).
\begin{prop}\label{19}
\begin{itemize}
  \item [(a)] Every comorphic $R$-module is Hopfian.
  \item [(b)] Every morphic $R$-module is co-Hopfian.
\end{itemize}
\end{prop}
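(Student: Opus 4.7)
The plan is to handle the two parts by exploiting the description of distinguished submodules (completely irreducible for the comorphic side, cyclic for the morphic side) in terms of ring elements, and then use the hypothesis on $f$ (surjective or injective) to force an equality that produces the desired conclusion.

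For part (a), I would start with a surjective endomorphism $f : M \to M$ and aim to show $\ker f = 0$. Since every submodule of $M$ is the intersection of the completely irreducible submodules containing it, and in particular $0 = \bigcap \{L : L \text{ completely irreducible in } M\}$, it suffices by Remark \ref{r2.1}(b) to check that $\ker f \subseteq L$ for every completely irreducible $L$. Fix such an $L$. By the comorphic hypothesis, $L = aM$ for some $a \in R$. Because $f$ is an epimorphism, Lemma \ref{7}(b) tells us that $f^{-1}(L)$ is again completely irreducible, so a second application of the comorphic hypothesis gives $f^{-1}(L) = bM$ for some $b \in R$. The clinching observation is that
\[
L \;=\; f\bigl(f^{-1}(L)\bigr) \;=\; f(bM) \;=\; b\,f(M) \;=\; bM \;=\; f^{-1}(L),
\]
using surjectivity twice. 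Hence $\ker f \subseteq f^{-1}(L) = L$, and intersecting over all completely irreducible $L$ gives $\ker f = 0$, so $f$ is an isomorphism.

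For part (b), I would take an injective endomorphism $f : M \to M$ and show that an arbitrary $m \in M$ lies in $f(M)$. Applying the BTTK-morphic condition to $m$ and to $f(m)$ produces elements $a, b \in R$ with $Rm = (0 :_M a)$, $\mathrm{Ann}_R(Rm) = Ra + \mathrm{Ann}_R(M)$, and $R f(m) = (0 :_M b)$, $\mathrm{Ann}_R(Rf(m)) = Rb + \mathrm{Ann}_R(M)$. Injectivity of $f$ gives $\mathrm{Ann}_R(Rm) = \mathrm{Ann}_R(Rf(m))$, and therefore $Ra + \mathrm{Ann}_R(M) = Rb + \mathrm{Ann}_R(M)$. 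The key small lemma I would verify (by writing $b = ra + c$ with $c \in \mathrm{Ann}_R(M)$, and symmetrically for $a$) is that this coincidence of ideals forces $(0 :_M a) = (0 :_M b)$, hence $Rm = Rf(m)$. Then $m = s\,f(m) = f(sm)$ for some $s \in R$, so $m \in f(M)$ and $f$ is surjective.

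The main obstacle I anticipate is philosophical rather than technical: in part (a) the pullback $f^{-1}(L)$ of a completely irreducible submodule need not a priori be cut out in the same way as $L$, and the whole argument hinges on Lemma \ref{7}(b) together with the identity $f(bM) = bM$ coming from $f(M) = M$. In part (b) the corresponding subtlety is making sure that agreement of the annihilator ideals modulo $\mathrm{Ann}_R(M)$ actually propagates to an equality of cyclic submodules; once that is in hand, the conclusion $m = f(sm)$ is immediate. No further appeals beyond the comorphic/morphic definitions, Lemma \ref{7}(b), and Remark \ref{r2.1}(b) should be needed.
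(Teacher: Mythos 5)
Your proposal is correct in both parts. Part (a) is essentially the paper's own argument: fix a completely irreducible $L$, use Lemma \ref{7}(b) to see that $f^{-1}(L)$ is completely irreducible, write $f^{-1}(L)=bM$ by the comorphic hypothesis, and use surjectivity to get $L=f(f^{-1}(L))=bf(M)=bM=f^{-1}(L)$, so $\ker f\subseteq L$; intersecting over all $L$ via Remark \ref{r2.1}(b) gives $\ker f=0$. (Your extra step of also writing $L=aM$ is harmless but unused.) In part (b) you take a genuinely different, slightly longer route than the paper. The paper applies the morphic condition only to the single element $f(m)$, obtaining $Rf(m)=(0:_Ma)$; then $af(m)=f(am)=0$ forces $am=0$ by injectivity, whence $m\in(0:_Ma)=Rf(m)\subseteq \operatorname{Im}(f)$ directly. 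You instead invoke both halves of the definition at both $m$ and $f(m)$, observe that injectivity gives $\mathrm{Ann}_R(Rm)=\mathrm{Ann}_R(Rf(m))$, hence $Ra+\mathrm{Ann}_R(M)=Rb+\mathrm{Ann}_R(M)$, and then prove the small lemma that this equality of ideals forces $(0:_Ma)=(0:_Mb)$, so $Rm=Rf(m)$ and $m=f(sm)$. Each step of that chain checks out (writing $b=ra+c$ with $c\in\mathrm{Ann}_R(M)$ does give $(0:_Ma)\subseteq(0:_Mb)$, and symmetry gives the reverse), but the annihilator condition and the two-element bookkeeping are unnecessary: the paper's one-line observation that $af(m)=0$ already implies $am=0$ does all the work. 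Both arguments are valid; the paper's is the more economical one.
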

\begin{proof}
(a) Let $M$ be a comorphic $R$-module and $f:M\rightarrow M$ be a surjective endomorphism. Let $L$ be a completely irreducible submodule of $M$. By Lemma \ref{7} (b), $f^{-1}(L)$ is a completely irreducible submodule of $M$. Thus by assumption, $Ker(f)\subseteq f^{-1}(L)=aM$ for some $a \in R$. Since $f$ is surjective, $L=f( f^{-1}(L))=aM$. Therefore, $Ker(f)\subseteq L$. Thus by Remark \ref{r2.1} (b), $Ker(f)=0$ and we are done.

(b) Let $M$ be a morphic $R$-module and $f:M\rightarrow M$ be an injective endomorphism. Let $m \in M$. Then by assumption, $f(m)\in f(m)R=(0:_Ma)$ for some $a \in R$. Since $f$ is injective, $am=0$. Therefore, $ m \in (0:_Ma)=f(m)R\subseteq Im (f)$. Thus $M=Im (f)$.
\end{proof}

\begin{defn}\label{20}
\begin{itemize}
  \item [(a)] We say that an $R$-module $M$ is a \textit{quasi morphic module} if for each $a \in R$, we have $(0:_Ma)$ is a cyclic submodule of $M$ and $Ann_R((0:_Ma))=Ra+Ann_R(M)$.
  \item [(b)] We say that an $R$-module $M$ is a \textit{quasi comorphic module} if for each $a \in R$, we have $aM$ is a completely irreducible submodule of $M$ and $(aM:_RM)=Ra+Ann_R(M)$.
\end{itemize}
\end{defn}

It is easy to see that $R_R$ is quasi morphic if and only if $R$ is a morphic ring.

Recall that an $R$-module $M$ is said to be \emph{cocyclic} if
$M$ has a simple essential socle \cite{Yassemi1998T}.
\begin{prop}\label{21}
\begin{itemize}
  \item [(a)] Let $M$ be a cyclic $R$-module. If $M$ is a morphic $R$-module in the sense of Nicholson and Sánchez Campos, then $M$ is a quasi morphic $R$-module.
   \item [(b)] Let $M$ be a cocyclic $R$-module which satisfies the $DAC$. If $M$ is a morphic $R$-module in the sense of Nicholson and Sánchez Campos, then $M$ is a quasi comorphic $R$-module.
       \end{itemize}
\end{prop}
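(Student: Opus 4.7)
The plan is, for each $a \in R$, to apply the Nicholson--Sánchez Campos morphic hypothesis to the endomorphism ``multiplication by $a$'' on $M$. This produces an $R$-module isomorphism
$$
\varphi_a : M/aM \stackrel{\sim}{\longrightarrow} (0:_Ma),
$$
and the strategy in both parts is to transport the relevant structural properties of $M/aM$ (which are read off directly from the cyclic, resp.\ cocyclic, hypothesis on $M$) across $\varphi_a$ to $(0:_Ma)$.

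For part (a), if $M = Rm_0$ is cyclic then $M/aM$ is a cyclic $R$-module, and
$$
Ann_R(M/aM) = Ra + Ann_R(m_0) = Ra + Ann_R(M),
$$
using the obvious identity $Ann_R(m_0) = Ann_R(Rm_0) = Ann_R(M)$. Transporting across $\varphi_a$ gives that $(0:_Ma)$ is cyclic and $Ann_R((0:_Ma)) = Ra + Ann_R(M)$, which is exactly what Definition \ref{20}(a) demands.

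For part (b), I would first invoke the standard dictionary that a proper submodule $N$ of $M$ is completely irreducible if and only if $M/N$ is cocyclic -- this is immediate from unwinding the two definitions (completely irreducible means $M/N$ has a smallest nonzero submodule, and a simple essential socle is precisely such a smallest submodule). So the task reduces to showing that $M/aM$ is cocyclic and verifying the annihilator equality. For cocyclicity, I use that every nonzero submodule of a cocyclic module is itself cocyclic: if $S$ is the simple essential socle of $M$ and $N \subseteq M$ is nonzero, then $S \cap N \ne 0$ by essentiality, and since $S$ is simple this forces $S \subseteq N$; essentiality of $S$ inside $N$ is inherited from its essentiality inside $M$. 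Applying this to $N = (0:_Ma)$ (which is nonzero whenever $aM$ is proper, by $\varphi_a$) gives that $(0:_Ma)$ is cocyclic, hence so is $M/aM$ via $\varphi_a$, hence $aM$ is completely irreducible.

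It remains to check $(aM:_RM) = Ra + Ann_R(M)$. From $\varphi_a$, $(aM:_RM) = Ann_R(M/aM) = Ann_R((0:_Ma))$. Now apply DAC to the ideal $I = Ra + Ann_R(M)$: since $Ann_R(M)$ annihilates all of $M$, we have $(0:_MI) = (0:_Ma)$, and DAC gives $Ra + Ann_R(M) = I = Ann_R((0:_MI)) = Ann_R((0:_Ma)) = (aM:_RM)$. The main obstacle I expect is the cocyclic-inheritance step in (b) and the accompanying characterization of completely irreducible submodules via cocyclic quotients; once these are in hand, everything else is a formal unwinding of $\varphi_a$ combined with DAC.
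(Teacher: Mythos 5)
Your proposal is correct and follows essentially the same route as the paper: apply the morphic hypothesis to multiplication by $a$ to get $M/aM\cong(0:_Ma)$, transport cyclicity (resp.\ cocyclicity) across this isomorphism, and use the cyclic presentation (resp.\ the $DAC$) to identify $(aM:_RM)=Ann_R((0:_Ma))$ with $Ra+Ann_R(M)$. The only difference is cosmetic: where the paper cites Yassemi's Lemma~1.2 (submodules of cocyclic modules are cocyclic) and the Fuchs--Heinzer--Olberding characterization of completely irreducible submodules via cocyclic quotients, you prove these two facts directly.
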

\begin{proof}
(a) Let $M$ be a morphic $R$-module in the sense of Nicholson and Sánchez Campos and let $a \in R$. Then $M/aM\cong (0:_Ma)$.  Thus $(aM:_RM)=Ann_R(M/aM)=Ann_R((0:_Ma))$. As $M$ is cyclic, $(aM:_RM)=Ra+Ann_R(M)$. Thus   $Ann_R((0:_Ma))=Ra+Ann_R(M)$. Since, $M$ is cyclic, $M/aM$ is cyclic. Thus  $(0:_Ma)$ is cyclic.

(b) Let $M$ be a morphic $R$-module in the sense of Nicholson and Sánchez Campos and let $a \in R$. Then $M/aM\cong (0:_Ma)$ and so $(aM:_RM)=Ann_R((0:_Ma))$. As $M$ satisfies the $DAC$, we have $Ann_R((0:_Ma))=Ra+Ann_R(M)$. Thus $(aM:_RM)=Ra+Ann_R(M)$.  Since, $M$ is cocyclic,  $(0:_Ma)$ is cocyclic by \cite[Lemma 1.2]{Yassemi1998T}. Thus $M/aM$ is cocyclic and so $aM$ is a completely irreducible submodule of $M$ by \cite[Remark 1.1]{FHo06}.
\end{proof}

\end{document}